\documentclass[11pt,a4paper]{article}
\usepackage[letterpaper,top=2.5cm,bottom=2.5cm,left=2.5cm,right=2.5cm,marginparwidth=1.75cm]{geometry}
\DeclareMathAlphabet{\mathpzc}{OT1}{pzc}{m}{it}
\usepackage{amsmath,amsfonts,amsthm,amssymb}
\usepackage{amsmath,amssymb,color}
\usepackage{graphicx,fancyhdr}
\usepackage{mathrsfs,float}
\usepackage{multirow}
\usepackage{dsfont}
\usepackage{subfigure}
\usepackage{booktabs}
\usepackage{yfonts}
\usepackage{natbib}
\usepackage{authblk}
\usepackage{caption}
\usepackage{geometry}
\usepackage{bbm}
\usepackage{bm}
\makeatletter
\usepackage{hyperref}

\numberwithin{equation}{section}
\newtheorem{thm}{Theorem}[section]
\newtheorem{rem}[thm]{Remark}

\newtheorem{prop}[thm]{Proposition}
\newtheorem{prob}[thm]{Problem}
\newtheorem{defi}[thm]{Definition}

\newtheorem{lem}[thm]{Lemma}

\def\la{\lambda}

\def\om{\omega}
\def\f{\frac}
\def\nn{\nonumber}

\def\tri{\triangle}
\def\beqnx{\begin{eqnarray*}} \def\eqnx{\end{eqnarray*}}

\theoremstyle{definition}

\title{Uniqueness for an inverse coefficient problem of a weakly coupled parabolic system}
\author[a]{Caixuan Ren}
\author[b]{Kai Yu\thanks{Corresponding author: yukaimailbox@163.com}}
\author[b]{Zhiyuan Li}
\affil[a]{School of Mathematics and Statistics, Donghua University, Shanghai 201620, China}
\affil[b]{School of Mathematics and Statistics, Ningbo University, Ningbo 315211, China}

\date{}

\begin{document}
\maketitle

\renewcommand{\thefootnote}{\fnsymbol{footnote}}
\renewcommand{\thefootnote}{\arabic{footnote}}

\begin{abstract}

This paper considers the weakly coupled  parabolic system  $\partial_t u-\partial^2_xu +P(x)u=0$ with the homogeneous Neumann boundary condition, where \(P(x)\) is a \(2\times2\) symmetric real-valued function matrix. Under the assumption that the initial value \(a(x)\) is a generating element (i.e., it has a nonzero inner product with every eigenfunction), we prove that the coefficient matrix  $ P(x)$ is uniquely determined by the boundary observation
$u(0, t)$, $u(1, t)$, $0 < t < T$. The proof relies on the eigenfunction expansion of the solution to the initial-boundary value problem and an extension of the Gel'fand-Levitan theory to the parabolic system. \medskip

\noindent \textbf{Keywords:} Inverse coefficient problem, Gel'fand-Levitan theory, weakly coupled  parabolic system, uniqueness. \medskip

\noindent \textbf{Mathematics Subject Classifications (2020):} 35R30, 35K40. 

\end{abstract}

\section{Introduction}

Let \(\Sigma:=(0,1)\times(0,\infty)\) and consider the following weakly coupled parabolic system subject to the Neumann boundary condition:
\begin{equation}\label{equ}
\left\{\!\begin{alignedat}{5}
& \partial_t  u(x,t)-\partial_x^{2}  u(x,t) + P(x)u(x,t)=0, \quad &&(x,t)\in \Sigma,\\
&\partial_x  u(0,t)=\partial_x  u(1,t)= 0, \quad &&t \in (0,\infty), \\
& u(x,0)= a(x),  \quad && x\in(0,1),
\end{alignedat}\right.
\end{equation}
where $ u=(u_1,u_2)^{\rm T}$, $P(x)=(p_{ij}(x))$ is a symmetric real-valued function matrix with $p_{ij}(x)\in C^1([0,1]) $, and $a(x)=\left(a_1(x),a_2(x)\right)^{\rm T}$ with $a_i(x) \in  L^2(0,1)$ for ${1\leq i,j\leq 2}$.

The weakly coupled parabolic system \eqref{equ} is widely used to model various practical applications, for example: population dynamics of interacting species and insect dispersal \cite{Murray2002}; the emergence and growth of cancer and tumor-induced angiogenesis \cite{Chakrabarty2005}; and two-compound chemical reactions in the presence of diffusion that produce spatial patterns \cite{Turing1952}. In the forward problem, given the initial value \(a(x)\) and the coefficient matrix \(P(x)\), the existence and uniqueness of the solution are classical results \cite{Friedman2008, Henry1981}. This paper is concerned with the corresponding inverse problem:

\begin{prob} Given the initial value $ a(x)$ and under certain assumptions, can we uniquely determine $P(x)$ from the data $\{u(0,t), u(1,t)\}$ for $0<t< T$ with some finite $0<T<\infty$?
\end{prob}

It is obvious that the answer is negative without any further assumption on the pair $\{P, a\}$. Since a trivial initial value \(a\equiv 0\) leads to the identically zero solution regardless of the choice of \(P\), a non-trivial condition must be imposed on the initial value. Hence, we introduce the following definition.

\begin{defi}
Let $L^2(0,1)$ be the Hilbert space of square-integrable real-valued functions 
on the interval $(0,1)$, and let $\{L^2(0,1)\}^2$ be the corresponding product 
space. For $u = (u_1, u_2)^{\rm T}\in \{L^2(0,1)\}^2$, 
we define the inner product and the associated norm by
\[
(u,v)_{\{L^2(0,1)\}^2} = \int_0^1 \bigl( u_1(x)v_1(x) + u_2(x)v_2(x) \bigr)\,dx, \quad
\|u\|^2_{\{L^2(0,1)\}^2} = {(u,u)_{\{L^2(0,1)\}^2}}.
\]
For brevity, we write $(\cdot,\cdot)$ and $\|\cdot\|$ when the space is clear from the context.

Denote by $\mathcal{A}_P$ the operator 
$-\frac{\partial^2}{\partial x^2}+P(x)$ in $\{L^2(0,1)\}^2$ subject to homogeneous 
Neumann boundary conditions at $x=0$ and $x=1$. The corresponding eigenvalues 
and eigenfunctions are denoted by $\{\lambda_n\}_{n=1}^{\infty}$ and 
$\{\psi_n(x)\}_{n=1}^{\infty}$, respectively. Assume that each eigenvalue 
$\lambda_n$ is simple, and $\lambda_1<\lambda_2<\cdots<\lambda_n<\cdots$. 
An initial value $a\in\{L^2(0,1)\}^2$ is said to be a \textit{generating element} 
with respect to $\mathcal{A}_P$ if $(a,\psi_n(\cdot))\neq 0$ for all 
$n\in \mathbb{N}:=\{1,2,\cdots\}$.
\end{defi}

\begin{rem}
A sufficient condition for the simplicity of the eigenvalues of $\mathcal{A}_P$ 
is that the matrix $P(x)$ can be diagonalized. More precisely, if there exists an invertible constant matrix $U$ 
such that 
\[
U^{-1} P(x) U = \operatorname{diag}(\tilde{p}_1(x), \tilde{p}_2(x)),
\]
and the two scalar Sturm--Liouville problems
\[
y_i''(x) - \tilde{p}_i(x) y_i(x) = -\lambda y_i(x), \quad 
y_i'(0) = y_i'(1) = 0, \qquad i = 1,2,
\]
have disjoint spectra, then each eigenvalue of $\mathcal{A}_P$ is simple.
\end{rem}

For the purpose of the inverse problem,  we introduce a system \eqref{eqv} associated with a different coefficient matrix $Q(x)$:
\begin{equation}\label{eqv}
\left\{\!\begin{alignedat}{3}
& \partial_t v(x,t)-\partial_x^2 v(x,t)+Q(x)v(x,t)=0 \quad && (x,t)\in \Sigma,\\
& \partial_x v(0,t) =\partial_x v(1,t)=0 \quad &&t \in (0,\infty), \\
& v(x,0)=a(x) \quad && x\in(0,1),
\end{alignedat}\right.
\end{equation}
where $Q(x)=(q_{ij}(x))_{1\leq i,j \leq 2}\in \{C^1([0,1])\}^{2\times2}$ and $v=(v_1,v_2)^{\rm T}$.

Analogous to $\mathcal{A}_P$, let $\mathcal{A}_Q$ denote the operator $-\frac{\partial^2}{\partial x^2}+Q(x)$ in $\{L^2(0,1)\}^2$ 
with homogeneous Neumann boundary conditions. Its eigenvalues and eigenfunctions 
are denoted by $\{\mu_m\}_{m=1}^{\infty}$ and $\{\phi_m(x)\}_{m=1}^{\infty}$, 
respectively. We assume that each eigenvalue $\mu_m$ is simple and ordered such 
that $ \mu_1 < \mu_2 < \cdots<\mu_m<\cdots$. For each $m,n\in\mathbb{N}$, the eigenfunctions of both
$\mathcal{A}_P$ and $\mathcal{A}_Q$ are normalized at the left endpoint $x=0$ such that 
\[
\|\psi_n(0)\|_2 = \|(b^1_n,b^2_n)^{\rm T}\|_2 = 1, \quad 
\|\phi_m(0)\|_2 = \|(d^1_m,d^2_m)^{\rm T}\|_2 = 1 .
\]
Here $\|\cdot\|_2$ denotes the Euclidean norm in $\mathbb{R}^2$.

In this work, we take a approach by extending the classical 
Gel'fand-Levitan theory to the weakly coupled parabolic 
system. The proof combines an eigenfunction expansion of the 
solution with a transformation formula linking the spectral data of 
$\mathcal{A}_P$ and $\mathcal{A}_Q$ via hyperbolic equations. Under the generating element assumption, this yields the 
following uniqueness theorem.

\begin{thm}\label{th1}
Consider the systems \eqref{equ} and \eqref{eqv} for $P(x), Q(x)\in \{C^1([0,1])\}^{2\times2}$. Let $a\in \{L^2(0,1)\}^2$ and assume that $a(x)$ is a generating element with respect to both $\mathcal{A}_P$ and $\mathcal{A}_Q$. Then, the equality of the boundary data
\begin{equation}\label{data1}
u(0,t)=v(0,t), \quad u(1,t)=v(1,t), \quad 0<t< T
\end{equation}
implies that $P(x)=Q(x)$ for $x\in [0,1]$.
\end{thm}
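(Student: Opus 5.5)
We follow the route announced before Theorem \ref{th1}: extract spectral data from the boundary observations, then run a Gel'fand--Levitan type argument adapted to the $2\times2$ system.

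\textbf{Step 1 (from boundary data to spectral data).} Since $\mathcal{A}_P$ is self-adjoint with simple eigenvalues, the solution of \eqref{equ} is
\[
u(x,t)=\sum_{n=1}^\infty e^{-\lambda_n t}\,\frac{(a,\psi_n)}{\|\psi_n\|^2}\,\psi_n(x),
\]
and similarly for $v$ with $\mu_m,\phi_m$. Both series are real-analytic in $t>0$, so \eqref{data1} extends to all $t>0$. Each is a Dirichlet series with distinct exponents, so we compare coefficients of $e^{-\lambda t}$. The generating-element assumption and $\psi_n(0)\neq0$ (a Neumann eigenfunction with $\psi_n(0)=0$ vanishes identically by uniqueness for the Cauchy problem) ensure every coefficient at $x=0$ is nonzero. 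Hence $\{\lambda_n\}=\{\mu_n\}$, and for every $n$
\[
\frac{(a,\psi_n)}{\|\psi_n\|^2}\psi_n(0)=\frac{(a,\phi_n)}{\|\phi_n\|^2}\phi_n(0),\qquad
\frac{(a,\psi_n)}{\|\psi_n\|^2}\psi_n(1)=\frac{(a,\phi_n)}{\|\phi_n\|^2}\phi_n(1).
\]
Taking Euclidean norms in the first identity, and using the normalization $\|\psi_n(0)\|_2=\|\phi_n(0)\|_2=1$, gives $\psi_n(0)=\pm\phi_n(0)$. After fixing signs we may assume $\psi_n(0)=\phi_n(0)$. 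Then the scalar factors agree, $c_n:=(a,\psi_n)/\|\psi_n\|^2=(a,\phi_n)/\|\phi_n\|^2\neq 0$, and consequently $\psi_n(1)=\phi_n(1)$.

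\textbf{Step 2 (transformation operator).} Construct a $2\times2$ matrix kernel $K(x,y)$ on $0\le y\le x\le1$ with
\[
\psi_n(x)=\phi_n(x)+\int_0^x K(x,y)\phi_n(y)\,dy \quad\text{for all } n.
\]
This $K$ solves the matrix hyperbolic problem
\[
K_{xx}-K_{yy}=P(x)K-KQ(y),\qquad \frac{d}{dx}K(x,x)=\tfrac12\bigl(P(x)-Q(x)\bigr),\qquad K_y(x,0)=0.
\]
Existence follows by the standard successive-approximation argument in characteristic variables, which needs $P,Q\in C^1$. The equality $\psi_n(0)=\phi_n(0)$ is consistent with this operator, since the integral vanishes at $x=0$. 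Note also $K(0,0)=0$, because the operator acts on Neumann data with matching initial vectors.

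\textbf{Step 3 (Gel'fand--Levitan equation and completeness).} Both $\{\phi_n/\|\phi_n\|\}$ and $\{\psi_n/\|\psi_n\|\}$ are orthonormal bases of $\{L^2(0,1)\}^2$. Substituting the transformation into $(\psi_n,\psi_m)=\delta_{nm}\|\psi_n\|^2$ yields a Gel'fand--Levitan equation $K(x,y)+F(x,y)+\int_0^xK(x,s)F(s,y)\,ds=0$, where $F$ is built from the differences of the two spectral measures. We then aim to show $K(1,y)=0$ for a.e. $y$. The identity $\psi_n(1)=\phi_n(1)$ gives $\int_0^1K(1,y)\phi_n(y)\,dy=0$ for every $n$, and completeness of $\{\phi_n\}$ forces $K(1,\cdot)\equiv0$. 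The Neumann condition at $x=1$ also gives $\psi_n'(1)=\phi_n'(1)=0$, hence $K(1,1)=0$ and $\int_0^1 K_x(1,y)\phi_n(y)\,dy=0$, so $K_x(1,\cdot)\equiv0$ as well.

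\textbf{Step 4 (uniqueness for the hyperbolic problem).} We now have zero Cauchy data on $x=1$, $K_y(x,0)=0$, and the Goursat-type hyperbolic system from Step 2. Reflect evenly in $y$ and apply a Gronwall-type energy or characteristic estimate, propagating from $x=1$ down the triangle, to conclude $K\equiv0$. The diagonal relation then gives $P-Q=2\frac{d}{dx}K(x,x)=0$ on $[0,1]$.

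\textbf{Main obstacle.} We expect the hardest part to be this last backward-uniqueness step, together with the rigorous derivation of $K_x(1,\cdot)=0$. In the matrix setting $P$ and $Q$ do not commute with $K$, so the right-hand side $PK-KQ$ must be handled as a linear Volterra-type coupling in the characteristic integral equations. We would first try writing $K$ in characteristic coordinates on the triangle $\{0\le y\le x\le1\}$ with data on the side $x=1$, and proving uniqueness by iterated integral estimates.
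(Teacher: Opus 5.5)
Your proof is correct. Through Step 3 it is the paper's argument: expansion and analytic continuation, peeling off exponentials to get $\lambda_n=\mu_n$ and the coefficient identities at $x=0,1$, a kernel with $K_y(x,0)=0$, and completeness to get $K(1,\cdot)=K_x(1,\cdot)=0$. The changes there are cosmetic: you map $\phi_n\mapsto\psi_n$ instead of $\psi\mapsto\Phi$, you use the normalization at $x=0$ to fix signs so the paper's $c_n$ becomes $1$, and the Gel'fand--Levitan equation you mention is never used. The genuine difference is Step 4. The paper splits $D$ into two triangles. On $\Omega_2$ (vertices $(\tfrac12,\tfrac12),(1,0),(1,1)$), the Cauchy data on $x=1$ give $K\equiv0$ by Proposition \ref{goursat2}. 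On $\Omega_1$ (vertices $(0,0),(1,0),(\tfrac12,\tfrac12)$), the zero trace on the characteristic $y=1-x$ together with $K_y(x,0)=0$ gives $K\equiv0$ by the mixed problem of Proposition \ref{goursat4}. Your even reflection in $y$ does both at once. Since $K_y(x,0)=0$ also forces $K_{xy}(x,0)=0$, the reflected kernel is $C^2$ on $\{|y|\le x\le1\}$ and solves the same system with coefficient $Q(|y|)$. That triangle is exactly the backward domain of dependence of the segment $\{x=1,\ |y|\le1\}$, where the reflected Cauchy data vanish, so a single energy or Volterra estimate gives $K\equiv0$. This is more economical: in the homogeneous case used here, Proposition \ref{goursat4} is essentially the reflection in disguise. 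The same device also turns existence of $K$ into a Goursat problem with data on $y=\pm x$, avoiding the extension of the coefficient to $[0,2]$. What the paper's route buys is that all coefficients stay $C^1$, as Propositions \ref{goursat2} and \ref{goursat4} require, whereas $Q(|y|)$ is only Lipschitz. That is harmless, since uniqueness for the Cauchy problem needs only bounded coefficients, but you should say so explicitly. Note also that the reflection is indispensable: the plan in your \emph{Main obstacle} paragraph, working on $\{0\le y\le x\le1\}$ with data on $x=1$ alone, only reaches $\Omega_2$.
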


In the scalar case (i.e., when \(u\) has a single component), the inverse 
coefficient problem for parabolic equations has been studied extensively. 
Early investigations by Cannon \cite{Cannon1964} focused on determining space-dependent thermal conductivity 
in heat equations. Klibanov \cite{Klibanov1992} established uniqueness results 
using Carleman estimates to further developments. Parallel to these efforts, the Gel'fand--Levitan theory, originally developed for Sturm--Liouville operators 
\cite{Gelfand1951, Levitan1987, Marchenko1977}, was adapted to parabolic inverse problems: Pierce \cite{Pierce1979} identified eigenvalues and 
coefficients uniquely from one point over a finite time interval. Suzuki and Murayama \cite{Suzuki1980,Suzuki1983} established a unique identification theorem for both coefficients and initial values based on the boundary data within a finite time interval, while Murayama 
\cite{Murayama1981} provided a systematic treatment using the Gel'fand--Levitan framework. A comprehensive summary of uniqueness, stability, and numerical 
methods for parabolic inverse problems is given in the monograph by Isakov 
\cite{Isakov2006}, and the application of Carleman estimates to coefficient 
inverse problems is reviewed by Yamamoto \cite{Yamamoto2009}.

In contrast, research on inverse problems for coupled parabolic systems remains 
relatively limited. These contributions include Akhundov 
\cite{Akhundov2006} established uniqueness results for certain strongly 
coupled parabolic systems. Benabdallah et al. \cite{Benabdallah2009} subsequently 
considered a two-component parabolic system and, using Carleman estimates, 
obtained stability when only a single component is measured. In the one-dimensional 
setting, Cristofol et al. \cite{Cristofol2013} treated the case of discontinuous 
conductivities. Wu and Yu \cite{Wu2017} later proved H\"older stability for a strongly coupled reaction-diffusion system. More recently, Allal et al. \cite{Allal2020} 
established Lipschitz stability for degenerate coupled parabolic systems using 
locally distributed observations of a single component. Li and Sun \cite{Li2026} 
investigated simultaneous uniqueness and developed numerical inversion schemes 
for a coupled diffusion system. 
The scarcity of results in this direction stems largely from the difficulty of 
decoupling the observations: in a coupled system, measurements of one component 
inevitably entangle information from all unknown coefficients, rendering 
traditional spectral or scalar techniques much less effective.

The main contribution of this paper is the extension of the Gel'fand-Levitan 
theory to weakly coupled parabolic systems. This extension builds upon the 
spectral theory of the vector Sturm-Liouville operator, in particular on spectral asymptotics, the completeness of eigenfunction 
expansions, and the inverse spectral problem for linear differential operators (see \cite{Amrein2005, Carlson1993, Yurko2000, Yurko2006}). 
The proof of uniqueness rests on the transformation formula \eqref{tranfor}, 
which connects the eigenfunctions of $\mathcal{A}_P$ and $\mathcal{A}_Q$ through 
an integral kernel $K$ satisfying hyperbolic equations 
(see \cite{Bitsadze2014, Courant1962, Polyanin2008}). Expanding $u$ and $v$ in 
eigenfunction series, the condition \eqref{data1} implies 
$\lambda_n = \mu_n$, $\phi_n(0) = c_n\psi_n(0)$ and $\phi_n(1) = c_n\psi_n(1)$ for all $n$. These 
relations translate into the boundary conditions $K(1,y) = 0$ and 
$K_x(1,y) = 0$ on the kernel. Uniqueness results for the hyperbolic equation 
then force $K \equiv 0$, and the identity 
$K(x,x) = \frac{1}{2}\int_0^x (Q(s)-P(s))\,ds$ yields $P(x) = Q(x)$.

Our paper is organized as follows. Section \ref{sec2} presents several propositions required to establish the Gel'fand-Levitan theory for the weakly coupled parabolic system. Section \ref{sec3} contains the proof of the main uniqueness result based on the eigenfunction expansion of the solution and the Gel'fand-Levitan theory. Section \ref{sec4} offers concluding remarks. The appendix is devoted to the proof of the propositions introduced in Section \ref{sec2}.

\section{Preliminary for the Gel'fand--Levitan theory}
\label{sec2}

In this section we collect several results on a hyperbolic 
system, which will serve as essential propositions for the construction of the 
transformation kernel in the Gel'fand--Levitan framework developed in 
Section \ref{sec3}.

Let $\Omega\subset\mathbb{R}^2$ be the interior of an isosceles right triangle 
$\triangle ABC$ with $\overline{AC}=\overline{BC}$ and $\angle ACB=\frac{\pi}{2}$, 
and assume that $AB$ is parallel to one of the coordinate axes. 
For a vector-valued function $K_v=(K_{v_1},K_{v_2},K_{v_3},K_{v_4})^{\rm T}\in 
\{C^2(\bar{\Omega})\}^{4\times1}$, we denote by $\|K_v\|_{C^k(\bar{\Omega})}$ 
($k=0,1,2$) the maximum of the absolute values of all its components and of 
their partial derivatives up to order $k$ over $\bar{\Omega}$ \cite{AF2003}. An analogous 
convention is adopted for a matrix-valued function 
$r(x,y)=(r_{ij}(x,y))_{4\times4}\in \{C^1(\bar{\Omega})\}^{4\times4}$.

We consider the hyperbolic system
\begin{equation}\label{K}
\frac{\partial^2 K_v(x,y)}{\partial x^2} - \frac{\partial^2 K_v(x,y)}{\partial y^2} 
= r(x,y)K_v(x,y), \quad (x,y)\in\Omega.
\end{equation}
The following three propositions concern the unique solvability of 
\eqref{K} under various boundary conditions prescribed on the characteristic 
segments $AC$, $BC$, and $AB$. Their proofs are constructive and rely on 
successive approximations; detailed arguments are provided in the Appendix.

\begin{prop}\label{goursat1}
For any $f\in\{C^2(\overline{AC})\}^{4\times1}$ and $g\in\{C^2(\overline{BC})\}^{4\times1}$ 
with the compatibility condition $f|_C=g|_C$, there exists a unique solution $K_v\in\{C^2(\bar{\Omega})\}^{4\times1}$ 
of \eqref{K} satisfying
\begin{equation}\label{K41bc}
K_v|_{AC}=f,\qquad K_v|_{BC}=g.
\end{equation}
Moreover, there exists a constant $M>0$ depending only on $\|r\|_{C^1(\bar{\Omega})}$ 
such that
\begin{equation}\label{estKc1}
\|K_v\|_{C^2(\bar{\Omega})} \leq M\bigl(\|f\|_{C^2(\overline{AC})} + \|g\|_{C^2(\overline{BC})}\bigr).
\end{equation}
\end{prop}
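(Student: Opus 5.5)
The plan is the classical Goursat argument: pass to characteristic coordinates, convert the problem into a Volterra-type integral equation, solve it by successive approximations, and then recover $C^2$ regularity and the estimate \eqref{estKc1}.

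Since $AB$ is parallel to a coordinate axis and $\angle ACB=\frac{\pi}{2}$, the legs $AC$ and $BC$ lie on the characteristic lines $x\pm y=\mathrm{const}$ of $\partial_x^2-\partial_y^2$. Introduce
\[
\xi=\frac{x+y}{2},\qquad \eta=\frac{x-y}{2}.
\]
In these variables $\partial_x^2-\partial_y^2=\partial_\xi\partial_\eta$. Translate so that $C$ corresponds to $(\xi,\eta)=(0,0)$ and $AC$, $BC$ lie on $\eta=0$ and $\xi=0$, after a reflection if necessary. Then $\Omega$ becomes a triangle bounded by the two coordinate axes and the image of $AB$. Write $\tilde K(\xi,\eta)=K_v(x,y)$ and $\tilde r(\xi,\eta)=r(x,y)$. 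The system \eqref{K} becomes
\[
\tilde K_{\xi\eta}=\tilde r\,\tilde K,
\]
with boundary data $\tilde K(\xi,0)=\tilde f(\xi)$ and $\tilde K(0,\eta)=\tilde g(\eta)$, where $\tilde f(0)=\tilde g(0)$ by the compatibility condition. The image region has the property that if $(\xi,\eta)\in\bar\Omega$, then the whole rectangle between $(0,0)$ and $(\xi,\eta)$ lies in $\bar\Omega$. This is what makes the problem a Volterra one.

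Integrating twice and using the boundary data, a $C^2$ solution is equivalent to a continuous solution of
\[
\tilde K(\xi,\eta)=\tilde f(\xi)+\tilde g(\eta)-\tilde f(0)+\int_0^{\xi}\!\!\int_0^{\eta}\tilde r(s,t)\tilde K(s,t)\,dt\,ds .
\]
The integral is understood with orientation, so it is signed if $\xi$ or $\eta$ is negative.

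I would solve this by Picard iteration. Set $\tilde K_0=\tilde f(\xi)+\tilde g(\eta)-\tilde f(0)$ and
\[
\tilde K_{k+1}-\tilde K_k=\iint \tilde r\,(\tilde K_k-\tilde K_{k-1}).
\]
Induction gives the bound
\[
|\tilde K_{k+1}-\tilde K_k|\le 3\|\tilde r\|_{C^0}^{k+1}\bigl(\|f\|_{C^0}+\|g\|_{C^0}\bigr)\frac{(|\xi||\eta|)^{k+1}}{((k+1)!)^2}.
\]
So the series converges uniformly on $\bar\Omega$ to a continuous solution with $\|\tilde K\|_{C^0}\le M_0(\|f\|+\|g\|)$, where $M_0$ depends only on $\|r\|_{C^0}$ and the size of the triangle. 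Uniqueness follows from the same estimate: the difference $W$ of two solutions satisfies the homogeneous equation, so $|W|\le \|W\|\,\|r\|^k(|\xi\eta|)^k/(k!)^2\to0$. A Gronwall-type version of this argument also works.

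For regularity, differentiate the integral equation:
\[
\tilde K_\xi=\tilde f'(\xi)+\int_0^\eta \tilde r(\xi,t)\tilde K(\xi,t)\,dt .
\]
This is continuous, and similarly for $\tilde K_\eta$; moreover $\tilde K_{\xi\eta}=\tilde r\tilde K$ is continuous. The pure second derivatives are the delicate point, and I expect them to be the main obstacle, because they are exactly where $r\in C^1$ is needed. Differentiating once more,
\[
\tilde K_{\xi\xi}=\tilde f''(\xi)+\int_0^\eta\bigl(\tilde r_\xi\tilde K+\tilde r\,\tilde K_\xi\bigr)\,dt .
\]
The right-hand side is continuous because $\tilde K_\xi$ is already known to be continuous. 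Each step is bounded by $\|r\|_{C^1}$ times the previous bounds. Chaining these bounds gives the $C^1$ and $C^2$ estimates in $(\xi,\eta)$.

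Finally, transform back to $(x,y)$. This is a linear change of variables with constant coefficients, so it preserves $C^2$ and changes norms only by absolute constants. This yields $K_v\in\{C^2(\bar\Omega)\}^{4\times1}$ together with \eqref{estKc1}, with $M$ depending only on $\|r\|_{C^1(\bar\Omega)}$; the fixed geometry of $\Omega$ is absorbed into $M$. The vector and matrix structure causes no difficulty, since every bound uses only the component-wise maximum norm multiplied by a factor of $4$.
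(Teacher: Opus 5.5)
Your proposal is correct and follows essentially the same route as the paper. Both arguments pass to characteristic coordinates so that \eqref{K} becomes $\partial_\xi\partial_\eta\tilde K=\tilde r\,\tilde K$, use the Volterra equation with free term $\tilde f(\xi)+\tilde g(\eta)-\tilde f(0)$, and obtain existence and uniqueness by successive approximations with $(k!)^{-2}$-type bounds, with $r\in C^1$ supplying the $C^2$ estimate. The only difference is minor: you get the second derivatives by differentiating the integral equation satisfied by the limit, whereas the paper bounds derivatives of consecutive iterates and passes to the limit termwise.
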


\begin{prop}\label{goursat2}
For any $f\in\{C^2(\overline{AB})\}^{4\times1}$ and $g\in\{C^1(\overline{AB})\}^{4\times1}$, 
there exists a unique solution $K_v\in\{C^2(\bar{\Omega})\}^{4\times1}$ of \eqref{K} satisfying
\begin{equation}\label{K42bc}
K_v|_{AB}=f,\qquad \frac{\partial K_v}{\partial\nu}\Big|_{AB}=g,
\end{equation}
where $\nu$ denotes the outward unit normal to $AB$. Furthermore, there exists 
a constant $M>0$ depending only on $\|r\|_{C^1(\bar{\Omega})}$ such that
\begin{equation}\label{estKc2}
\|K_v\|_{C^2(\bar{\Omega})} \leq M\bigl(\|f\|_{C^2(\overline{AB})} + \|g\|_{C^1(\overline{AB})}\bigr).
\end{equation}
\end{prop}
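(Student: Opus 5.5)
We prove Proposition \ref{goursat2} by passing to characteristic coordinates and writing the problem as a Volterra-type integral equation. The equation is then solved by successive approximations.

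\textbf{Setup.} For definiteness, place $AB$ on the line $x=1$, with $A=(1,-1)$, $B=(1,1)$ and $C=(0,0)$; the other orientations are handled identically after a reflection or rotation. Then $\Omega=\{|y|<x<1\}$, $AB$ is non-characteristic, and $\nu=(1,0)$, so $\partial_\nu=\partial_x$.

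\textbf{Integral equation.} For $(x,y)\in\bar\Omega$, let $D(x,y)$ be the backward characteristic triangle with apex $(x,y)$ and base the segment of $AB$ between $(1,y-(1-x))$ and $(1,y+(1-x))$. By convexity of the triangle $\Omega$, $D(x,y)\subset\bar\Omega$. D'Alembert's formula for the wave operator $\partial_x^2-\partial_y^2$, with Cauchy data on $x=1$, gives
\begin{equation*}
K_v(x,y)=F(x,y)+\frac12\iint_{D(x,y)} r(\xi,\eta)K_v(\xi,\eta)\,d\xi\,d\eta ,
\end{equation*}
where
\begin{equation*}
F(x,y)=\tfrac12\bigl[f(y+1-x)+f(y-1+x)\bigr]-\tfrac12\int_{y-1+x}^{y+1-x} g(s)\,ds .
\end{equation*}
The sign in front of the integral is fixed by differentiating twice and checking. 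Conversely, any $C^2$ solution of this integral equation solves \eqref{K} with the data \eqref{K42bc}. This is verified by differentiating: the data $f\in C^2$ and $g\in C^1$ give $F\in C^2(\bar\Omega)$ with $\|F\|_{C^2}\le C(\|f\|_{C^2}+\|g\|_{C^1})$.

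\textbf{Existence and uniqueness in $C^0$.} Set $K^{(0)}=F$ and $K^{(n+1)}=F+\tfrac12\iint_{D}rK^{(n)}$. Write $h=1-x$ for the height of $D(x,y)$; the area of $D$ is $h^2$. By induction,
\begin{equation*}
|K^{(n+1)}-K^{(n)}|(x,y)\le \|F\|_{C^0}\,\frac{(\|r\|_{C^0}\,h^2)^{n+1}}{(2n+2)!}\cdot c^{n+1}.
\end{equation*}
The factorial comes from the nested integration over $D$. Hence the series converges uniformly to a continuous solution, and the same estimate applied to the difference of two solutions gives uniqueness: a Gronwall-type iteration forces that difference to vanish.

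\textbf{$C^2$ regularity and the estimate.} This is the main obstacle: showing $K_v\in C^2(\bar\Omega)$ with \eqref{estKc2}. Differentiating the area integral once in $x$ or $y$ produces line integrals of $rK_v$ along the two characteristic sides of $D$, so $K_v\in C^1$. Differentiating once more produces derivatives of $rK_v$ along those sides. This is exactly where $r\in C^1$ is needed. I would run the successive-approximation argument directly for the system consisting of $K_v$ and its first derivatives. Then I would express the second derivatives via the line-integral formula, obtaining bounds of the form $\|K_v\|_{C^2}\le M\|F\|_{C^2}$ with $M$ depending only on $\|r\|_{C^1}$. Combining this with the bound on $F$ yields \eqref{estKc2}.
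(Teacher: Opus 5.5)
Your proposal is correct and follows essentially the paper's argument. Both reduce the Cauchy problem on $AB$ to a Volterra equation over the characteristic triangle cut off by $AB$, solve it by successive approximations with factorial bounds, and obtain $C^2$ regularity and the estimate by differentiating the integral term; the only difference is that the paper works in the characteristic coordinates $X=\frac12(x+y)$, $Y=\frac12(x-y)$, while you use d'Alembert's formula directly in $(x,y)$. Your $D(x,y)$ and $F$ are in fact the correct counterparts of the paper's $\omega_2(X,Y)$ and $k_0$, which as printed contain typos: the region should read $\{\xi>X,\ \eta>Y,\ \xi+\eta<1\}$, and $k_0=\frac12[F(X)+F(1-Y)]-\frac{1}{\sqrt2}\int_Y^{1-X}G(s)\,ds$.
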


\begin{prop}\label{goursat4}
For any $f\in\{C^2(\overline{AC})\}^{4\times1}$ and $g\in\{C^1(\overline{AB})\}^{4\times1}$, 
there exists a unique solution $K_v\in\{C^2(\bar{\Omega})\}^{4\times1}$ of \eqref{K} satisfying
\begin{equation}\label{K44bc}
K_v|_{AC}=f,\qquad \frac{\partial K_v}{\partial\nu}\Big|_{AB}=g.
\end{equation}
Moreover, there exists a constant $M>0$ depending only on $\|r\|_{C^1(\bar{\Omega})}$ 
such that
\begin{equation}\label{estKc3}
\|K_v\|_{C^2(\bar{\Omega})} \leq M\bigl(\|f\|_{C^2(\overline{AC})} + \|g\|_{C^1(\overline{AB})}\bigr).
\end{equation}
\end{prop}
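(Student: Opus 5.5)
We prove Proposition \ref{goursat4} for the mixed problem, with data on the characteristic $AC$ and the Neumann datum on the non-characteristic side $AB$. The plan is to reduce it to the Goursat problem of Proposition \ref{goursat1} by reflection across the line through $AB$.

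To fix coordinates, take $AB$ on the line $x=1$ (the case needed later, where $K_x(1,y)$ appears). So $A=(1,-1)$, $B=(1,1)$, $C=(0,0)$, possibly after translation and scaling. Then $AC$ and $BC$ are the characteristics $y=-x$ and $y=x$, and $\partial_\nu=\partial_x$ on $AB$.

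First remove the Neumann datum. Choose $h(x,y)=(x-1)\,\tilde g(y)\,\chi(x)$, where $\tilde g\in C^1$ extends $g$ and $\chi$ is a cutoff equal to $1$ near $x=1$. Only $C^1$ regularity of $g$ is available, while $h$ must be $C^2$. I would therefore mollify $g$, carry the estimates, and pass to the limit using the bound \eqref{estKc3}. Alternatively, I would work directly with the integral equation below, which only involves $g$ itself and not its derivatives.

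Setting $W=K_v-h$ gives a problem of the form $W_{xx}-W_{yy}=rW+F$ with $\partial_xW|_{AB}=0$ and modified datum on $AC$. Since $\partial_xW=0$ on $x=1$, extend $W$ and $r$ evenly in $x$ across $x=1$. Reflecting $\Omega$ gives the square $\Omega'$ with vertices $C=(0,0)$, $A$, $C'=(2,0)$, $B$. The reflected $r$ is only Lipschitz across $x=1$, but it is continuous there, and the even extension keeps the equation valid in the weak or integral sense. The characteristic $AC$ reflects to $AC'$. Rotating by $45^\circ$, i.e. with $\xi=x+y$ and $\eta=x-y$, the data live on the two characteristics through $A$. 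This is a Goursat problem for the square $\Omega'$, whose solution domain is the characteristic rectangle with corner $A$.

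This domain is not a triangle, so I would not invoke Proposition \ref{goursat1} verbatim. Instead I would write the equivalent integral equation
\[
W(\xi,\eta)=\Phi(\xi,\eta)+\tfrac14\int_{\xi_A}^{\xi}\!\int_{\eta_A}^{\eta}\bigl(rW+F\bigr)(s,t)\,dt\,ds ,
\]
where $\Phi$ carries the data on $AC$ and $AC'$. I would solve it by successive approximations exactly as in the appendix proof of Proposition \ref{goursat1}. The iterates satisfy $|W_{k+1}-W_k|\le C\,(M(\xi-\xi_A)(\eta-\eta_A))^k/(k!)^2$, which gives existence, uniqueness and the $C^0$ bound. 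Differentiating the integral equation gives the $C^1$ and $C^2$ bounds, using $r\in C^1$. Restricting to $\Omega$ and adding back $h$ yields \eqref{estKc3}. For uniqueness, the difference of two solutions solves the problem with zero data, so its even extension satisfies the homogeneous integral equation and hence vanishes by the Gronwall-type iteration.

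The main obstacle is the corner $A$, where the characteristic $AC$ meets $AB$. There the datum $f$ and the Neumann datum $g$ must satisfy compatibility conditions for $K_v\in C^2$, and the statement does not list them. Differentiating $f$ along $AC$ gives $\partial_xK-\partial_yK$ at $A$, and $g(A)$ gives $\partial_xK$, so $C^1$ at $A$ is automatic. However, $C^2$ at $A$ relates $f''$, $g'$ and $r f$ at $A$. I would first check whether the reflected solution is automatically $C^2$ across $x=1$. The even reflection makes $\partial_xW$ odd, and the equation gives $W_{xx}$ continuous. If a genuine mismatch arises, I expect the intended statement implicitly includes these compatibility conditions, as they hold in the application. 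I would add them as hypotheses, or note that in Section \ref{sec3} the data are zero and they hold trivially.
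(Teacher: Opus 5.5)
Your reflection strategy can be made to work, and it differs from the paper's argument mainly in being the unfolded form of it. The paper never leaves $\Omega$. In characteristic coordinates it writes the Volterra equation $k=k_0-\iint_{\omega_3}Rk-2\iint_{T}Rk$, with $k_0=F(X)+F(1-Y)-F(1)+\sqrt2\int_0^YG$. Here $T=\{\xi>1-Y,\ \eta>0,\ \xi+\eta<1\}$ is the small triangle adjacent to $AB$; this is how the paper's $\omega_4$ has to be read for the equation to be Volterra in $Y$. The factor $2$ on $T$ is your reflected half of the characteristic rectangle folded back into $\Omega$. The Neumann datum enters only through $\int_0^YG$. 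As a result, the paper needs no lift and no regularization, and its coefficient $R$ stays $C^1$. Your version has to pay for all three of these, and the first is not handled correctly.

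\textbf{The gap: handling $g\in C^1$.}
\begin{itemize}
\item With $h=(x-1)\tilde g(y)\chi(x)$, the source term $rh-h_{xx}+h_{yy}$ contains $g''$. Your $C^2$ bound for $W$, obtained by differentiating the integral equation, then needs a derivative of that source, i.e.\ $g'''$.
\item Mollifying and ``passing to the limit using \eqref{estKc3}'' is circular. For the approximants, your construction only bounds $K_\epsilon$ in terms of high norms of $g_\epsilon$, not $\|g_\epsilon\|_{C^1}$. So you obtain neither the $C^1$-dependence claimed in \eqref{estKc3} nor $C^2$-convergence of $K_\epsilon$.
\item Your displayed integral equation still contains that source, so the ``alternative'' does not avoid the problem.
\end{itemize}

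\textbf{A fix.} Use a lift that solves the free wave equation: $h(x,y)=\frac12\int_{y-(x-1)}^{y+(x-1)}g(s)\,ds$.
\begin{itemize}
\item It satisfies $h_{xx}=h_{yy}$, $h(1,\cdot)=0$ and $h_x(1,\cdot)=g$.
\item For $(x,y)\in\bar\Omega$ it only uses $g$ on $[-1,1]$.
\item It satisfies $\|h\|_{C^2(\bar\Omega)}\le C\|g\|_{C^1}$.
\item Then $W=K_v-h$ has source $rh\in C^1$, with norm controlled by $\|r\|_{C^1}\|g\|_{C^1}$.
\end{itemize}

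\textbf{Two smaller steps you must state.}
\begin{itemize}
\item On the square, the reflected $\tilde r$ and the reflected source are only Lipschitz, so ``using $r\in C^1$'' is not literally available. The $C^2$ bound survives because their derivatives jump only across $x=1$. That line is non-characteristic and meets each characteristic line once, so the one-dimensional integrals of $\partial_\xi(\tilde r W+\text{source})$ along characteristics remain continuous.
\item For existence you must show that the Goursat solution on the square is even in $x-1$. The reflection maps the problem to itself, so uniqueness gives evenness, and this is what yields $\partial_xW(1,\cdot)=0$.
\end{itemize}

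\textbf{The corner $A$.} The corner is not an obstacle, and you should not add hypotheses. In your coordinates, take $f(s)=K_v(s,-s)$ and $g(y)=\partial_xK_v(1,y)$. The second derivatives at $A$ must satisfy
\begin{itemize}
\item $K_{xx}-2K_{xy}+K_{yy}=f''(1)$,
\item $K_{xy}=g'(-1)$,
\item $K_{xx}-K_{yy}=r(A)f(1)$.
\end{itemize}
This is a nonsingular $3\times 3$ linear system. It determines $(K_{xx},K_{xy},K_{yy})(A)$ and imposes no constraint on the data. In the reflected picture, the only Goursat compatibility is that the data on $CA$ and $AC'$ agree at $A$, which is automatic. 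So the proposition holds as stated, consistent with the paper's $k_0$, which is $C^2$ exactly when $f\in C^2$ and $g\in C^1$.

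(Also, the paper applies this proposition with $AB$ on $y=0$, not $x=1$. Your normalization is still without loss of generality, since swapping $x$ and $y$ only replaces $r$ by $-r$.)
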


\section{Uniqueness}\label{sec3}

In order to prove $P(x)=Q(x)$, we introduce the Gel'fand-Levitan theory for the vectorial Sturm-Liouville operator $\mathcal{A}_P$, $\mathcal{A}_Q$. To this end, we propose the following two lemmas. The propositions proposed in the previous section will be the key ingredient in the
proof of Lemma \ref{lemma1}, while Lemma \ref{lemma2} follows immediately from the integration by parts.

\begin{lem}\label{lemma1}
For given matrix-valued functions $P(x), Q(x) \in \{C^1([0,1])\}^{2\times 2}$, 
there exists a unique solution $K \in \{C^2(\bar{D})\}^{2\times 2}$ of the system
\begin{equation}\label{KK}
\begin{cases}
K_{xx}(x,y) - K_{yy}(x,y) + K(x,y)P(y) = Q(x)K(x,y), & (x,y) \in \bar{D}, \\[4pt]
K(x,x) = \dfrac{1}{2}\displaystyle\int_0^x \bigl(Q(s) - P(s)\bigr)\,ds, & x \in [0,1], \\[6pt]
K_y(x,0) = 0, & x \in [0,1],
\end{cases}
\end{equation}
where $D = \{(x,y) \mid 0 < x < 1,\; 0 < y < x\}$.
\end{lem}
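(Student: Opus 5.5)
The plan is to reduce \eqref{KK} to the vector hyperbolic system \eqref{K}, with boundary data posed on the sides of an isosceles right triangle, and then apply Proposition \ref{goursat1} or Proposition \ref{goursat4}.

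\textbf{Vectorization.} Write the four entries of $K$ as a column,
\[
K_v=(K_{11},K_{12},K_{21},K_{22})^{\rm T}.
\]
Then $Q(x)K-KP(y)$ is linear in $K_v$. Concretely,
\[
Q(x)K-KP(y)=r(x,y)K_v,\qquad r(x,y)=Q(x)\otimes I_2 - I_2\otimes P(y)^{\rm T},
\]
a $4\times4$ matrix with entries in $C^1$. So the equation in \eqref{KK} becomes exactly \eqref{K}, $\partial_x^2K_v-\partial_y^2K_v=rK_v$.

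\textbf{Reflection to a triangle.} The domain $D$ is a triangle with vertices $(0,0),(1,0),(1,1)$, and only its hypotenuse $y=x$ is characteristic. To obtain an isosceles right triangle with characteristic legs, extend evenly in $y$: set $K(x,-y)=K(x,y)$ and $P(-y)=P(y)$. The Neumann condition $K_y(x,0)=0$ is exactly what makes the even extension $C^1$ across $y=0$.

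The extended domain $\Omega=\{|y|<x<1\}$ has vertices $C=(0,0)$, $A=(1,1)$, $B=(1,-1)$, with right angle at $C$ and $AB$ on the line $x=1$, parallel to an axis. On the two legs I prescribe
\[
K(x,\pm x)=\tfrac12\int_0^x(Q-P)\,ds=:f(x),
\]
which satisfies the compatibility condition at $C$ trivially. Since $P,Q\in C^1$, we have $f\in C^2$, so Proposition \ref{goursat1} gives a unique $C^2$ solution in the reflected triangle.

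\textbf{Main obstacle: regularity of the evenly extended $P$.} The even extension $P(|y|)$ is only Lipschitz at $y=0$ unless $P'(0)=0$, so $r$ is not $C^1$ on $\bar\Omega$. Two routes avoid this.

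\emph{Route 1: work directly on $D$ with Proposition \ref{goursat4}.} Use the triangle with characteristic leg $y=x$ and Neumann side on $y=0$. The difficulty is that $D$ is not of the shape required there, since the axis-parallel side $y=0$ is a leg of $D$, not its hypotenuse.

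\emph{Route 2, which I would try first: characteristic coordinates.} Set $\xi=x+y$, $\eta=x-y$. Then $D$ becomes $\{0<\eta<\xi,\ \xi+\eta<2\}$, an isosceles right triangle with right angle at the origin. Its hypotenuse $\xi+\eta=2$ is not axis parallel, but the non-characteristic side $y=0$ becomes $\xi=\eta$. The condition $K_y=0$ there reads $(\partial_\xi-\partial_\eta)K=0$, i.e. a normal derivative condition on a non-characteristic segment. Together with the characteristic leg $\eta=0$ (the image of $y=x$), this is precisely the configuration of Proposition \ref{goursat4}, after rotating by $45^\circ$ so that the side carrying the normal-derivative data is parallel to an axis. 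The rotation keeps the form \eqref{K} up to swapping the roles of variables, with $r$ still $C^1$ since no reflection of $P$ is needed.

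\textbf{Conclusion.} I would check carefully that the triangle obtained in Route 2 matches $\triangle ABC$ with $\overline{AC}=\overline{BC}$. I would then take $f\in C^2$ as above and $g=0$, and read off existence and uniqueness of $K\in C^2(\bar D)$ from Proposition \ref{goursat4}. Uniqueness in the original variables follows because the change of variables is a bijection preserving the data.
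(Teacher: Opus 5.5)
\textbf{The plan you commit to (Route 2) has a genuine gap: the transformed triangle is misidentified, and no change of variables can turn $D$ into the shape Proposition~\ref{goursat4} requires.}

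Under $\xi=x+y$, $\eta=x-y$, the vertices $(0,0),(1,0),(1,1)$ of $D$ go to $(0,0),(1,1),(2,0)$. So:
\begin{itemize}
\item The right angle is at $(1,1)$, not at the origin.
\item The hypotenuse is $\eta=0$, the image of $y=x$, and it is axis-parallel.
\item The legs are $\xi=\eta$ (image of $y=0$) and $\xi+\eta=2$ (image of $x=1$).
\end{itemize}
In these variables the operator is $4\partial_\xi\partial_\eta$, whose characteristics are the axis-parallel lines. Hence the only characteristic side is the hypotenuse $\eta=0$. The leg $\xi=\eta$ carrying the Neumann data and the leg $\xi+\eta=2$ are both non-characteristic.

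Proposition~\ref{goursat4} needs the opposite configuration: two characteristic legs (Dirichlet data on one, nothing on the other) and a non-characteristic hypotenuse carrying the normal derivative. A linear change of variables maps characteristics of the old operator to characteristics of the new one. So the number of characteristic sides is invariant: one for $D$, two for $\triangle ABC$. Your $45^\circ$ rotation just returns a dilate of $D$ with the Neumann side as a leg. Route 2 is therefore Route 1 in disguise, and it yields neither existence nor uniqueness.

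\textbf{The missing idea is to enlarge the domain rather than transform it.} This is what the paper does.
\begin{itemize}
\item Extend $Q$ to some $\tilde Q\in\{C^1([0,2])\}^{2\times2}$.
\item Work on $\tilde D=\{0<y<1,\ y<x<2-y\}$, the triangle with vertices $(0,0),(2,0),(1,1)$. Its legs $y=x$ and $y=2-x$ are characteristic and its hypotenuse lies on $y=0$.
\item Since $y$ only ranges over $[0,1]$ in $\tilde D$, $P$ never needs extending and the coefficient matrix stays $C^1$. This sidesteps the regularity problem you ran into.
\item Apply Proposition~\ref{goursat4} with $f(x)=\tfrac12\int_0^x(Q_v-P_v)\,ds$ on $y=x$ and $g=0$ on $y=0$. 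Restricting to $\bar D$ gives existence.
\end{itemize}

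Uniqueness cannot be read off from $\tilde D$, since a solution given only on $D$ is not known to extend. Split $D$ into two pieces:
\begin{itemize}
\item $\Omega_1$, with vertices $(0,0),(1,0),(\tfrac12,\tfrac12)$, which has the shape of Proposition~\ref{goursat4};
\item $\Omega_2$, with vertices $(\tfrac12,\tfrac12),(1,0),(1,1)$, which has the shape of Proposition~\ref{goursat1}.
\end{itemize}
Homogeneous data force $K_v\equiv0$ on $\Omega_1$. This gives zero Goursat data on both characteristic legs of $\Omega_2$, so $K_v\equiv0$ there as well.

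Incidentally, the reflection route you discarded can be salvaged, though it goes beyond the propositions as stated. The successive approximations in the proof of Proposition~\ref{goursat1} still produce a $C^2$ solution when $R$ is only Lipschitz with first derivatives continuous off $X=Y$; dominated convergence keeps $\partial_X^2k$ and $\partial_Y^2k$ continuous. Evenness in $y$, and hence $K_y(x,0)=0$, then follows from uniqueness, because $r(x,-y)=r(x,y)$ and the data on $y=\pm x$ coincide.
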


\begin{proof}
To handle the system in a unified manner, we vectorize the matrix equation. 
Let $\operatorname{vec}: \mathbb{R}^{2\times 2} \to \mathbb{R}^{4\times1}$ be the column-wise 
vectorization operator. Define
\[
K_v(x,y) = \operatorname{vec}(K(x,y)), \qquad
Q_v(s) = \operatorname{vec}(Q(s)), \qquad
P_v(s) = \operatorname{vec}(P(s)).
\]
Then, the matrix equation  
\eqref{KK} is equivalent to the following vector function
equation
\begin{equation}\label{KKv}
\begin{cases}
\partial_x^2 K_v(x,y) - \partial_y^2 K_v(x,y) = r(x,y)\,K_v(x,y), & (x,y) \in \bar{D}, \\[4pt]
K_v(x,x) = \dfrac{1}{2}\displaystyle\int_0^x \bigl(Q_v(s) - P_v(s)\bigr)\,ds, & x \in [0,1], \\[6pt]
\partial_y K_v(x,0) = 0, & x \in [0,1],
\end{cases}
\end{equation}
where the matrix $r(x,y)$ is given explicitly by
\[
r(x,y) = \begin{pmatrix}
q_{11}(x)-p_{11}(y) & -p_{12}(y) & q_{12}(x) & 0 \\
-p_{12}(y) & q_{11}(x)-p_{22}(y) & 0 & q_{12}(x) \\
q_{12}(x) & 0 & q_{22}(x)-p_{11}(y) & -p_{12}(y) \\
0 & q_{12}(x) & -p_{12}(y) & q_{22}(x)-p_{22}(y)
\end{pmatrix}.
\]
To show the existence of $K_v$, extend $Q(x)=(q_{ij}(x)) \in \{C^1([0,1])\}^{2\times2}$ to a function $\tilde{Q}(x)=(\tilde{q}_{ij}(x)) \in \{C^1([0,2])\}^{2\times2}$. 
Define the extended triangular region
\[
\tilde{D} = \bigl\{ (x,y) \mid 0 < y < 1,\, y < x < 2-y \bigr\}.
\]
Applying Proposition \ref{goursat4} on $\tilde{D}$, we obtain a solution 
$\tilde{K}_v \in \{C^2(\bar{\tilde{D}})\}^{4\times1}$ of
\begin{equation*}
\begin{cases}
\partial_x^2 \tilde{K}_v - \partial_y^2 \tilde{K}_v = \tilde{r}(x,y)\,\tilde{K}_v, & (x,y) \in {\bar{\tilde{D}}}, \\[4pt]
\tilde{K}_v(x,x) = \dfrac{1}{2}\displaystyle\int_0^x \bigl(Q_v(s) - P_v(s)\bigr)\,ds, & x \in [0,1], \\[6pt]
\partial_y \tilde{K}_v(x,0) = 0, & x \in [0,2],
\end{cases}
\end{equation*}
where $\tilde{r}(x,y)$ is defined analogously by
\[
\tilde r(x,y) = \begin{pmatrix}
\tilde q_{11}(x)-p_{11}(y) & -p_{12}(y) & \tilde q_{12}(x) & 0 \\
-p_{12}(y) & \tilde q_{11}(x)-p_{22}(y) & 0 & \tilde q_{12}(x) \\
\tilde q_{12}(x) & 0 & \tilde q_{22}(x)-p_{11}(y) & -p_{12}(y) \\
0 & \tilde q_{12}(x) & -p_{12}(y) & \tilde q_{22}(x)-p_{22}(y)
\end{pmatrix}.
\]
Thus $K_v := \tilde{K}_v|_{\bar{D}}$ belongs to $\{C^2(\bar{D})\}^{4\times1}$ 
and satisfies \eqref{KKv}, thereby establishing existence.

For the uniqueness, suppose $K_v \in \{C^2(\bar{D})\}^{4\times1}$ solves \eqref{KKv} with 
homogeneous data (i.e., $Q_v \equiv P_v \equiv 0$ on the right-hand side of the 
second equation). We must show $K_v \equiv 0$ on $\bar{D}$.  To this end, split $D$ into
\[
\Omega_1 = \bigl\{ (x,y) \mid 0 < y < \tfrac12,\; y < x < 1-y \bigr\}, \quad
\Omega_2 = D \setminus \Omega_1.
\]
Applying Proposition \ref{goursat4} to the subdomain $\Omega_1$, we deduce $K_v \equiv 0$ 
on $\Omega_1$.  Since $K_v \in \{C^2(\bar{D})\}^{4\times1}$, the function and its first derivatives 
vanish on the common boundary $\partial\Omega_1 \cap D$.  Consequently, 
Proposition \ref{goursat1} yields 
$K_v \equiv 0$ on $\Omega_2$.  Hence $K_v \equiv 0$ on the whole domain $\bar{D}$. Therefore, this complete the proof.
\end{proof}

\begin{lem}\label{lemma2}
Let $K(x,y)$ be the kernel constructed in Lemma \ref{lemma1}. For each 
$\lambda\in\mathbb{R}$, let $\psi(x)=\psi(x;\lambda)$ be the solution of
\begin{equation*}\label{psl}
\left\{\!\begin{alignedat}{2}
& -\psi''(x)+P(x)\psi(x)=\lambda \psi (x), \\
&\psi(0)=\!\begin{alignedat}{2}
\left ( \begin{matrix}
b^1 \\
b^2
\end{matrix}\right )
\end{alignedat},
\psi^{\prime}(0)= \!\begin{alignedat}{2}
\left ( \begin{matrix}
0 \\
0
\end{matrix}\right )
\end{alignedat}
\end{alignedat}\right.
\end{equation*}
normalized by $\|\psi(0)\|_2 = \|(b^1, b^2)^{\rm T}\|_2 = 1$. Define
\begin{equation}\label{tranfor}
\Phi(x) = \psi(x) + \int_0^x K(x,y)\,\psi(y)\,dy.
\end{equation}
Then $\Phi(x)$ satisfies
\begin{equation}\label{qslc}
\left\{\!\begin{alignedat}{2}
& -\Phi''(x)+Q(x)\Phi(x)=\lambda \Phi (x), \\
&\Phi(0)=\!\begin{alignedat}{2}
\left ( \begin{matrix}
b^1 \\ b^2
\end{matrix}\right )
\end{alignedat},
\Phi^{\prime}(0)= \!\begin{alignedat}{2}
\left ( \begin{matrix}
0 \\ 0
\end{matrix}\right )
\end{alignedat}.
\end{alignedat}\right.
\end{equation}
\end{lem}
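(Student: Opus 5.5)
We verify both the initial conditions and the differential equation for $\Phi$ directly from \eqref{tranfor}, using the three relations in \eqref{KK} and integration by parts in $y$.

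\textbf{Initial conditions.} At $x=0$ the integral in \eqref{tranfor} vanishes, so $\Phi(0)=\psi(0)=(b^1,b^2)^{\rm T}$. Differentiating \eqref{tranfor} gives
\[
\Phi'(x)=\psi'(x)+K(x,x)\psi(x)+\int_0^x K_x(x,y)\psi(y)\,dy .
\]
Since $K(0,0)=0$ by the second line of \eqref{KK}, this yields $\Phi'(0)=\psi'(0)=0$.

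\textbf{Second derivative.} Differentiating once more,
\[
\Phi''(x)=\psi''(x)+\tfrac{d}{dx}\bigl[K(x,x)\bigr]\psi(x)+K(x,x)\psi'(x)+K_x(x,y)\big|_{y=x}\psi(x)+\int_0^x K_{xx}(x,y)\psi(y)\,dy .
\]
The matrix ordering must be kept throughout: $K$ always multiplies $\psi$ from the left, and $P(y)$ stands to the right of $K$.

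\textbf{Converting $K_{xx}$.} By the first line of \eqref{KK}, $K_{xx}=K_{yy}-KP(y)+Q(x)K$. For the $K_{yy}$ term we integrate by parts twice in $y$:
\[
\int_0^x K_{yy}\psi\,dy=\bigl[K_y\psi-K\psi'\bigr]_{y=0}^{y=x}+\int_0^x K\psi''\,dy .
\]
At $y=0$ the boundary term vanishes, because $K_y(x,0)=0$ and $\psi'(0)=0$. In the remaining integral we substitute $\psi''=(P(y)-\lambda)\psi$. This produces $\int_0^x K(x,y)P(y)\psi\,dy$, which cancels the $-KP$ contribution, together with $-\lambda\int_0^x K\psi\,dy$.

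\textbf{Collecting terms.} The term $Q(x)\int_0^x K\psi\,dy$ remains. The terms $\pm K(x,x)\psi'(x)$ cancel. We are left with the terms carrying $\psi(x)$:
\[
\Bigl(\tfrac{d}{dx}K(x,x)+K_x(x,x)+K_y(x,x)\Bigr)\psi(x)=2\,\tfrac{d}{dx}K(x,x)\,\psi(x)=\bigl(Q(x)-P(x)\bigr)\psi(x),
\]
where the last equality uses the diagonal condition in \eqref{KK}. Combining everything with $\psi''=(P-\lambda)\psi$ gives
\[
\Phi''=(P-\lambda)\psi+(Q-P)\psi+Q\!\int_0^x K\psi\,dy-\lambda\!\int_0^x K\psi\,dy=(Q-\lambda)\Phi ,
\]
which is exactly the equation in \eqref{qslc}.

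\textbf{Main obstacle.} The only delicate point is the noncommutativity. The right multiplication $KP$ in \eqref{KK} is precisely what lets $\int_0^x K\psi''\,dy$ absorb the $P(y)$ term. The boundary contributions must also be tracked with consistent left-multiplication. The $C^2(\bar D)$ regularity from Lemma \ref{lemma1}, together with $\psi\in C^2$, justifies both the differentiation under the integral sign and the integrations by parts.
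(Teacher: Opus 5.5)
Your proof is correct and follows the paper's route. Both differentiate the transformation formula twice, integrate $\int_0^x K_{yy}\psi\,dy$ by parts (the $y=0$ terms vanish by $K_y(x,0)=0$ and $\psi'(0)=0$), and use the diagonal condition to turn the $\psi(x)$ terms into $(Q-P)\psi$. Your explicit use of $K(0,0)=0$ and your left/right ordering with $K(x,y)P(y)$, as in Lemma~\ref{lemma1}, are in fact more careful than the paper: its displayed computation writes $\bigl(Q(x)-P(y)\bigr)K(x,y)$, and the cancellation there is valid only with $P(y)$ to the right of $K$, as you wrote it.
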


\begin{proof}
Since ${\Phi}(x) = \psi(x) + \int_0^x K(x,y)\psi(y)\,dy$, we get
\begin{equation*}\label{Phidef}
{\Phi}(0) = \psi(0) = (b^1,b^2)^{\rm T}.
\end{equation*}
Differentiating with respect to $x$ gives
\[
{\Phi}'(x) = \psi'(x) + K(x,x)\psi(x) + \int_0^x K_x(x,y)\psi(y)\,dy
\]
and evaluating at $x=0$ yields ${\Phi}'(0) = \psi'(0) = (0,0)^{\rm T}$. 

Next, we verify that ${\Phi}$ satisfies $-\Phi''(x)+Q(x)\Phi(x)=\lambda \Phi (x)$. Differentiating the integral expression yields
\begin{align*}
{\Phi}''(x) &= \psi''(x) + \frac{d}{dx}\bigl[K(x,x)\psi(x)\bigr] + K_x(x,x)\psi(x) + \int_0^x K_{xx}(x,y)\psi(y)\,dy.
\end{align*}
From Lemma \ref{lemma1}, $K$ satisfies
\begin{equation*}\label{kernel-eq}
K_{xx}(x,y) - K_{yy}(x,y) = \bigl(Q(x) - P(y)\bigr)K(x,y)
\end{equation*}
with boundary conditions $K_y(x,0) = 0$ and $K(x,x) = \frac{1}{2}\int_0^x (Q(s)-P(s))\,ds$.
Using $\psi''(y) = \lambda\psi(y) - P(y)\psi(y)$ and integrating by parts, we obtain

\begin{align*}
\int_0^x K_{xx}(x,y)\psi(y)\,dy 
&= \int_0^x K_{yy}(x,y)\psi(y)\,dy + \int_0^x \bigl(Q(x)-P(y)\bigr)K(x,y)\psi(y)\,dy \\
&= \Bigl[ K_y(x,y)\psi(y) - K(x,y)\psi'(y) \Bigr]_{y=0}^{y=x} 
+ \int_0^x K(x,y)\psi''(y)\,dy \\
&\quad + Q(x)\int_0^x K(x,y)\psi(y)\,dy - \int_0^x P(y)K(x,y)\psi(y)\,dy \\
&= K_y(x,x)\psi(x) - K(x,x)\psi'(x) + \int_0^x K(x,y)\bigl(P(y)\psi(y) - \lambda\psi(y)\bigr)\,dy \\
&\quad + Q(x)\int_0^x K(x,y)\psi(y)\,dy - \int_0^x P(y)K(x,y)\psi(y)\,dy \\
&= K_y(x,x)\psi(x) - K(x,x)\psi'(x) - \lambda\int_0^x K(x,y)\psi(y)\,dy \\
&\quad + Q(x)\int_0^x K(x,y)\psi(y)\,dy .
\end{align*}
Notice that $-\psi''(x) + Q(x)\psi(x) = \lambda\psi(x) - P(x)\psi(x) + Q(x)\psi(x)$ and $\frac{d}{dx}K(x,x) = K_x(x,x) + K_y(x,x) = \frac{1}{2}(Q(x)-P(x))$. Substitute these into $-{\Phi}''(x)+Q(x){\Phi}(x)$ yields
\begin{equation*}
\begin{aligned}
-{\Phi}''(x) + Q(x){\Phi}(x) 
&= -\psi''(x) - \frac{d}{dx}\bigl[K(x,x)\psi(x)\bigr] - K_x(x,x)\psi(x) \\
&\quad - K_y(x,x)\psi(x) + K(x,x)\psi'(x) +\lambda\int_0^x K(x,y)\psi(y)\,dy \\
&\quad - Q(x)\int_0^x K(x,y)\psi(y)\,dy + Q(x)\psi(x) + Q(x)\int_0^x K(x,y)\psi(y)\,dy\\
&= -\psi''(x) - \frac{d}{dx}K(x,x)\psi(x)-K(x,x)\psi'(x) - K_x(x,x)\psi(x) \\
&\quad - K_y(x,x)\psi(x) + K(x,x)\psi'(x) +\lambda\int_0^x K(x,y)\psi(y)\,dy  + Q(x)\psi(x) \\
&= -\psi''(x) +P(x)\psi(x)  +\lambda\int_0^x K(x,y)\psi(y)\,dy=\la\Phi(x).\\
\end{aligned}    
\end{equation*}
Therefore, the proof is completed.
\end{proof}

\begin{proof}[\bf Proof of Theorem \ref{th1}]

By eigenfunction expansion, the solutions of \eqref{equ} and \eqref{eqv} are given by
\begin{align}\label{uexpan}
u(x,t) = \sum_{n=1}^\infty e^{-\lambda_n t} \, \frac{(a,\psi_n)}{\rho_n} \, \psi_n(x), \quad
v(x,t) = \sum_{m=1}^\infty e^{-\mu_m t} \, \frac{(a,\phi_m)}{\sigma_m} \, \phi_m(x),
\end{align}
where $\rho_n = \|\psi_n\|^2 = (\psi_n,\psi_n)$ and $\sigma_m = \|\phi_m\|^2 = (\phi_m,\phi_m)$.

By the condition \eqref{data1} and the analytic continuation in $t$, we have $u(0,t)=v(0,t)$, $u(1,t)=v(1,t)$ for $0<t<\infty$. From \eqref{uexpan}, one obtains
\begin{align}\label{data1expan}
&\sum_{n=1}^\infty e^{-\lambda_nt}\f{(a,\psi_n)}{\rho_n}\psi_n(0)=\sum_{m=1}^\infty e^{-\mu_mt}\f{(a,\phi_m)}{\sigma_m}\phi_m(0), \quad 0<t<\infty, \\\label{data2expan}
&\sum_{n=1}^\infty e^{-\lambda_nt}\f{(a,\psi_n)}{\rho_n}\psi_n(1)=\sum_{m=1}^\infty e^{-\mu_mt}\f{(a,\phi_m)}{\sigma_m}\phi_m(1), \quad 0<t<\infty.
\end{align}
According to \eqref{data1expan}, we know
$$e^{-\mu_{1}t}\f{(a,\phi_{1})}{\sigma_{1}}\phi_{1}(0)=\sum_{n=1}^{\infty} e^{-\lambda_nt}\f{(a,\psi_n)}{\rho_n}\psi_n(0)-\sum_{m=2}^\infty e^{-\mu_mt}\f{(a,\phi_m)}{\sigma_m}\phi_m(0),$$
thus
\begin{equation}\label{lam1mu11}
e^{-\mu_{1}t}\f{|(a,\phi_{1})|}{\sigma_{1}}\|\phi_{1}(0)\|_2
\leq\sum_{n=1}^{\infty} e^{-\lambda_nt}\f{|(a,\psi_n)|}{\rho_n}\|\psi_n(0)\|_2+\sum_{m=2}^\infty e^{-\mu_mt}\f{|(a,\phi_m)|}{\sigma_m}\|\phi_m(0)\|_2.    
\end{equation}
In view of $ \|\psi_n(0)\|_2=\|\phi_m(0)\|_2=1$, \eqref{lam1mu11} implies
\begin{align}\label{lam1mu12}
e^{-\mu_{1}t}\f{|(a,\phi_{1})|}{\sigma_{1}}
&\leq\sum_{n=1}^{\infty} e^{-\lambda_nt}\f{|(a,\psi_n)|}{\rho_n}+\sum_{m=2}^\infty e^{-\mu_mt}\f{|(a,\phi_m)|}{\sigma_m}.
\end{align}
From H$\ddot{\rm o}$lder inequality and the relation  \eqref{lam1mu12}, it follows that
\begin{equation*}\label{lam1mu13}
\begin{aligned}
\nn e^{-\mu_{1}t}\f{|(a,\phi_{1})|}{\sigma_{1}}
&\leq e^{-\lambda_1t}\sum_{n=1}^{\infty} e^{-(\lambda_n-\lambda_1)t}\f{|(a,\psi_n)|}{\rho_n}+e^{-\mu_2t}\sum_{m=2}^\infty e^{-(\mu_m-\mu_2)t}\f{|(a,\phi_m)|}{\sigma_m}\\
&\leq e^{-\lambda_1t}\Bigl(\sum_{n=1}^\infty e^{-2(\lambda_n-\lambda_1)t}\Bigr)^{\f{1}{2}}\Bigl(\sum_{n=1}^\infty\f{|(a,\psi_n)|^2}{\rho_n^2}\Bigr)^{\f{1}{2}}\\
&+e^{-\mu_2t}\Bigl(\sum_{m=2}^\infty e^{-2(\mu_m-\mu_2)t}\Bigr)^{\f{1}{2}}\Bigl(\sum_{m=2}^\infty\f{|(a,\phi_m)|^2}{\sigma_m^2}\Bigr)^{\f{1}{2}}\\
&\leq e^{-\lambda_1t}\|a\|\Bigl(\sum_{n=1}^\infty e^{-2(\lambda_n-\lambda_1)t}\Bigr)^{\f{1}{2}}+e^{-\mu_2t}\|a\|\Bigl(\sum_{m=2}^\infty e^{-2(\mu_m-\mu_2)t}\Bigr)^{\f{1}{2}}.
\end{aligned}    
\end{equation*}
Utilizing asymptotic behaviour of eigenvalues
\begin{equation*}\label{lamays}
\sqrt{\lambda_n}=n\pi+\mathcal{O}\Bigl(\f{1}{n}\Bigr), \quad n\rightarrow \infty,\,\,\,
\sqrt{\mu_m}=m\pi+\mathcal{O}\Bigl(\f{1}{m}\Bigr),  \quad m\rightarrow \infty,
\end{equation*}
the estimate $$\sum_{n=1}^\infty e^{-2(\lambda_n-\lambda_1)t},\,\, \sum_{m=2}^\infty e^{-2(\mu_m-\mu_2)t}\leq C, \quad 0< t < \infty $$ hold with a constant $C>0$. Hence,
\begin{align}\label{lam1mu13}
\nn \f{|(a,\phi_{1})|}{\sigma_{1}}
\leq C\|a\|(e^{-(\mu_2-\mu_{1})t}
+e^{-(\lambda_1-\mu_{1})t})
\end{align}
for a constant $C>0$.
Therefore, if $\lambda_1>\mu_1$, then the right hand side tends to zero as $t\rightarrow\infty$. Hence we have $(a,\phi_{1})=0 $, which contradicts to the assumption.

Similarly, we can get
\begin{align*}
e^{-\lambda_{1}t}\f{|(a,\psi_{1})|}{\rho_{1}}\|\psi_{1}(0)\|_2
&\leq\sum_{n=2}^\infty e^{-\lambda_nt}\f{|(a,\psi_n)|}{\rho_n}\|\psi_n(0)\|_2+\sum_{m=1}^\infty e^{-\mu_mt}\f{|(a,\phi_m)|}{\sigma_m}\|\phi_m(0)\|_2\\
&\leq e^{-\lambda_2t}\Bigl(\sum_{n=2}^\infty e^{-2(\lambda_n-\lambda_2)t}\Bigr)^{\f{1}{2}}\Bigl(\sum_{n=2}^\infty\f{|(a,\psi_n)|^2}{\rho_n^2}\Bigr)^{\f{1}{2}}\nn\\
&+e^{-\mu_1t}\Bigl(\sum_{m=1}^\infty e^{-2(\mu_m-\mu_1)t}\Bigr)^{\f{1}{2}}\Bigl(\sum_{m=1}^\infty\f{|(a,\phi_m)|^2}{\sigma_m^2}\Bigr)^{\f{1}{2}}\nn\\
&\leq C\|a\|(e^{-\lambda_2t}
+e^{-\mu_1t}), \quad 0< t<\infty.
\end{align*}
Therefore, if $\mu_1>\lambda_1$, then $(a,\psi_{1})=0$, which is also a contradiction. Thus, we have proven $\lambda_1=\mu_1$. Substituting it into \eqref{data1expan} yields
$$e^{-\lambda_{1}t}\left(\f{(a,\psi_{1})}{\rho_{1}}\psi_{1}(0)-\f{(a,\phi_{1})}{\sigma_{1}}\phi_{1}(0)\right)=\sum_{m=2}^\infty e^{-\mu_mt}\f{(a,\phi_m)}{\sigma_m}\phi_m(0)-\sum_{n=2}^{\infty} e^{-\lambda_nt}\f{(a,\psi_n)}{\rho_n}\psi_n(0).$$
For $t>0$, the right-hand side of the above equality is estimated by $C(e^{-\lambda_2t}+e^{-\mu_2t})$ with a constant $C$.
Therefore, 
$$\left|\f{(a,\psi_{1})}{\rho_{1}}\psi_{1}(0)-\f{(a,\phi_{1})}{\sigma_{1}}\phi_{1}(0)\right|\le C\Bigl(e^{(\la_1-\la_2)t}+e^{(\la_1-\mu_2)t}\Bigr).$$
Since $\la_1<\la_2$ and $\la_1=\mu_1<\mu_2$, let $t\to \infty$, then
$$\f{(a,\psi_{1})}{\rho_{1}}\psi_{1}(0)=\f{(a,\phi_{1})}{\sigma_{1}}\phi_{1}(0)$$
follows and \eqref{data1expan} reads
\begin{equation*}
\sum_{n=2}^\infty e^{-\lambda_nt}\f{(a,\psi_n)}{\rho_n}\psi_n(0)=\sum_{m=2}^\infty e^{-\mu_mt}\f{(a,\phi_m)}{\sigma_m}\phi_m(0).
\end{equation*}
Continuing this procedure, we can derive
\begin{equation}\label{eigenequi}\lambda_n=\mu_n,\,\, \f{(a,\psi_{n})}{\rho_{n}}\psi_{n}(0)=\f{(a,\phi_{n})}{\sigma_{n}}\phi_{n}(0),\quad n\in \mathbb{N}.\end{equation}

By standard asymptotic analysis of the vector Sturm--Liouville problem with Neumann 
boundary conditions, the eigenfunctions admit the expansions
\begin{equation*}\label{psiays}
\psi_n(x) = \begin{pmatrix}
b^1_n \cos n\pi x + \mathcal{O}\bigl(\frac{1}{n}\bigr) \\[4pt]
b^2_n \cos n\pi x + \mathcal{O}\bigl(\frac{1}{n}\bigr)
\end{pmatrix}, \qquad
\phi_m(x) = \begin{pmatrix}
d^1_m \cos m\pi x + \mathcal{O}\bigl(\frac{1}{m}\bigr) \\[4pt]
d^2_m \cos m\pi x + \mathcal{O}\bigl(\frac{1}{m}\bigr)
\end{pmatrix}
\end{equation*}
uniformly for $x\in[0,1]$. In particular, evaluating at $x=1$ gives

\begin{equation*}
\|\psi_n(1)\|_2 = \left\| \begin{pmatrix}
b^1_n (-1)^n + \mathcal{O}\bigl(\frac{1}{n}\bigr) \\[4pt]
b^2_n (-1)^n + \mathcal{O}\bigl(\frac{1}{n}\bigr)
\end{pmatrix} \right\|_2 \le C, \quad
\|\phi_m(1)\|_2 = \left\| \begin{pmatrix}
d^1_m (-1)^m + \mathcal{O}\bigl(\frac{1}{m}\bigr) \\[4pt]
d^2_m (-1)^m + \mathcal{O}\bigl(\frac{1}{m}\bigr)
\end{pmatrix} \right\|_2 \le C
\end{equation*}
for a constant $C>0$ independent of $m$ and $n$. Utilizing the same procedure as above to \eqref{data2expan} implies
\begin{equation} \label{data22}
\f{(a,\psi_{n})}{\rho_{n}}\psi_{n}(1)=\f{(a,\phi_{n})}{\sigma_{n}}\phi_{n}(1),\quad n\in \mathbb{N}.
\end{equation}
From \eqref{eigenequi} and \eqref{data22}, there exist $c_n=\f{(a,\psi_{n})}{\rho_{n}}\cdot\f{\sigma_{n}}{(a,\phi_{n})}\neq 0$ such that
\begin{equation}\label{cn1}
\phi_n(0)=c_n\psi_n(0), \,\,
\phi_n(1)=c_n\psi_n(1),\quad n\in \mathbb{N}.     
\end{equation}

We denote by $\Phi(x;\lambda,\xi)$ the unique solution of \eqref{qslc} with initial value $\Phi(0)=\xi$; by linearity, 
$\Phi(x;\lambda,c\xi)=c\,\Phi(x;\lambda,\xi)$ for any constant $c$. 
Since the eigenfunction $\phi_n(x)$ satisfies the same differential equation 
with $\lambda=\lambda_n$ and the initial condition $\Phi(0)=\phi_n(0)$, $\Phi^{\prime}(0)=(0,0)^{\rm T}$, uniqueness 
implies $\Phi(x;\lambda_n,\phi_n(0))=\phi_n(x)$. Combining this with the relation 
\eqref{eigenequi} and \eqref{cn1}, we obtain 
for $\lambda=\lambda_n$,
\[
\Phi\bigl(x;\lambda_n, \psi_n(0)\bigr) 
= \Phi\Bigl(x;\lambda_n, \frac{\phi_n(0)}{c_n}\Bigr) 
= \frac{1}{c_n}\,\Phi\bigl(x;\lambda_n, \phi_n(0)\bigr) 
= \frac{1}{c_n}\,\phi_n(x).
\]
Substituting this into the transformation formula \eqref{tranfor} yields
\begin{equation}\label{tranforPcn}
\frac{1}{c_n}\,\phi_n(x) = \psi_n(x) + \int_0^x K(x,y)\,\psi_n(y)\,dy.
\end{equation}
Evaluating \eqref{tranforPcn} at $x = 1$ and using $\phi_n'(1) = 0$ together with 
\eqref{cn1}, we obtain
\begin{equation}\label{k1y}
\int_0^1 K(1,y)\,\psi_n(y)\,dy =  \begin{pmatrix} 0 \\ 0 \end{pmatrix}, \,\,
K(1,1)\,\psi_n(1) + \int_0^1 K_x(1,y)\,\psi_n(y)\,dy = \begin{pmatrix} 0 \\ 0 \end{pmatrix}, \,\,\,\, n \in \mathbb{N}.
\end{equation}

The family $\{\psi_n\}_{n=1}^\infty$ is complete in $\{L^2(0,1)\}^2$.  From the 
first equation in \eqref{k1y} and completeness we immediately deduce
\begin{equation}\label{k1y0}
K(1,y) = 0 \quad \text{for } y \in [0,1].
\end{equation}
In particular, $K(1,1) = 0$.  Substituting this into the second equation of 
\eqref{k1y} gives
\begin{equation}\label{Kx1y}
\int_0^1 K_x(1,y)\,\psi_n(y)\,dy = 0, \quad n \in \mathbb{N}.
\end{equation}
Applying completeness to \eqref{Kx1y} again yields
\begin{equation}\label{kx1y0}
K_x(1,y) = 0 \quad \text{for } y \in [0,1].
\end{equation}

Now consider the restriction of $K$ to the subdomain $\bar{\Omega}_2$. 
In view of \eqref{k1y0} and \eqref{kx1y0}, Proposition \ref{goursat2} implies 
$K \equiv 0$ on $\bar{\Omega}_2$.  In particular,
\begin{equation}\label{kx1-x}
K(x,1-x) = 0, \quad x \in [1/2,1].
\end{equation}
Furthermore, the kernel satisfies $K_y(x,0) = 0$ for $x\in [0,1]$ by 
Lemma \ref{lemma1}. Together with \eqref{kx1-x}, Proposition \ref{goursat4} 
yields $K \equiv 0$ on $\bar{\Omega}_1$.  Hence $K \equiv 0$ on the whole 
domain $\bar{D}$.

Finally, the relation $K(x,x) = \frac{1}{2}\int_0^x \bigl(Q(s) - P(s)\bigr)\,ds$ from Lemma \ref{lemma1} forces
\[
\frac{1}{2}\int_0^x \bigl(Q(s) - P(s)\bigr)\,ds = 0, \quad  x\in [0, 1],
\]
which implies $P(x) = Q(x)$ for $x \in [0,1]$.  This completes the proof 
of Theorem \ref{th1}.
\end{proof}

\section{Concluding remarks}\label{sec4}

In this paper, we considered the weakly coupled parabolic system 
$\partial_t u - \partial_x^2 u + P(x)u = 0$ subject to homogeneous Neumann 
boundary conditions, and proved the following uniqueness theorem: if the 
initial value $a(x)$ is a generating element with respect to both operators 
$\mathcal{A}_P$ and $\mathcal{A}_Q$, then the boundary observations 
$u(0,t)$ and $u(1,t)$ on any finite time interval $(0,T)$ uniquely determine 
the symmetric matrix coefficient $P(x)$. The paper also provides a detailed 
analysis of hyperbolic equations in the vector Sturm--Liouville setting, establishing the requisite existence, uniqueness and regularity estimates. Together, these results supply a rigorous theoretical foundation for the uniqueness of the inverse coefficient problem for weakly coupled parabolic systems, and offer a mathematical justification for the experimental determination of space-dependent coefficients in reaction-diffusion models. 

\section*{Acknowledgments}

The first author is supported by National Natural Science Foundation of China (no. 11601075), Fundamental Research Funds for the Central Universities of China (no. 2232015D3-35), and National Natural Science Foundation of China (no. 11526051).

\section*{Appendix}

In proving these propositions, we place the vertices at $A=(1,1)$, $B=(1,-1)$ and $C=(0,0)$ without loss of generality. Introducing the characteristic coordinates
\begin{equation}\label{cha var}
X = \frac{1}{2}(x+y), \qquad Y = \frac{1}{2}(x-y),  \tag{A.1}  
\end{equation}
the domain $\tri ABC$ is mapped onto the triangular region 
$\bar{\omega} = \{(X,Y)\mid X,Y\ge 0,\, X+Y\le 1\}$. 
Setting $k(X,Y) = K_v(X+Y, X-Y)$ and $R(X,Y) = \,r(X+Y, X-Y)$, 
then \eqref{K} becomes
\begin{equation}\label{k}
\partial_{XY}k(X,Y) = R(X,Y)\,k(X,Y), \quad (X,Y)\in \bar{\omega}. \tag{A.2}
\end{equation}

\subsection*{Proof of Proposition \ref{goursat1}.}

Using the change of variables \eqref{cha var}, the boundary conditions for \eqref{k} take the form
\begin{equation}\label{41bc}
k(X,0) = F(X)=f(X), \,\,X\in [0,1], \,\,\,k(0,Y) = G(Y)=g(Y), \,\, Y\in [0,1]. \tag{A.3}
\end{equation}
Equation $\partial_{XY} k=0$ in $\bar\om$ with the boundary conditions \eqref{41bc} possesses a unique 
solution $k_0\in C^2(\bar{\omega})$ given explicitly by d'Alembert's formula:
\begin{equation}\label{k410d}
k_0(X,Y) = F(X) + G(Y) - F(0). \tag{A.4}
\end{equation}
Moreover, for any $\Gamma\in C^0(\bar{\omega})$, the solution $k\in C^2(\bar{\omega})$ of the equation
\begin{equation*}
\begin{cases}
\partial_{XY} k=\Gamma, & (X,Y)\in \bar{\omega},\\
k(X,0)=0, & X\in [0,1],\\
k(0,Y)=0, & Y\in [0,1]
\end{cases}
\end{equation*}
is unique, and by Duhamel's principle, can be written as
\begin{equation*}\label{k41}
k(X,Y)=\int_0^X\int_0^Y\Gamma(\xi,\eta)d\eta d\xi=\iint_{\omega_1(X,Y)}\Gamma(\xi,\eta)d\xi d\eta,
\end{equation*}
where $\omega_1(X,Y)=\{(\xi,\eta)\mid 0<\xi <X, 0<\eta <Y\}.$ Consequently, the original problem \eqref{k} with boundary conditions \eqref{41bc} is equivalent to the Volterra integral equation
\begin{equation}\label{k41int}
k(X,Y)= k_0(X,Y)+\iint_{\omega_1(X,Y)}R(\xi,\eta)k(\xi,\eta)d\xi d\eta. \tag{A.5}
\end{equation}

Define the operator $\mathcal{T}: C^0(\bar{\omega})\to C^0(\bar{\omega})$ by 
$$(\mathcal{T}k)(X,Y)= k_0(X,Y)+\iint_{\omega_1(X,Y)}R(\xi,\eta)k(\xi,\eta)d\xi d\eta.$$
Since $R\in C^1(\bar{\omega})$, there exists a constant $M>0$ such that 
$\|R\|_{C^1(\bar{\omega})}\leq M$. For any $k,\tilde{k}\in C^0(\bar{\omega})$, 
we have the estimate
\begin{align*}
\|\mathcal{T}k- \mathcal{T}\tilde{k}\|_{C^0(\bar{\omega})}\leq MXY\|k-\tilde{k}\|_{C^0(\bar{\omega})}\leq M \|k-\tilde{k}\|_{C^0(\bar{\omega})}
\end{align*} 
in view of $X,Y\in[0,1]$. Furthermore, by induction one easily obtains
\begin{equation*}\label{induesti}
\|\mathcal{T}^nk- \mathcal{T}^n\tilde{k}\|_{C^0(\bar{\omega})}\leq M^n\f{X^nY^n}{n!n!}\|k-\tilde{k}\|_{C^0(\bar{\omega})}\leq \f{M^n}{n!n!}\|k-\tilde{k}\|_{C^0(\bar{\omega})}
\end{equation*}
Choosing $n$ sufficiently large so that $ M^n/(n!\,n!) < 1$, we see 
that $\mathcal{T}^n$ is a contraction on $C^0(\bar{\omega})$. By the 
Banach fixed-point theorem, \eqref{k41int} admits a unique solution 
$k\in C^0(\bar{\omega})$.

The solution $k$ to \eqref{k41int} can be constructed explicitly as the uniform limit of the 
successive approximations $\{k^{(n)}\}_{n\geq0}$ defined by
\begin{equation*}\label{itek0}
k^{(0)}(X,Y) \equiv 0, \qquad 
k^{(n+1)}(X,Y) = k_0(X,Y) + \int_0^X\!\!\int_0^Y R(\xi,\eta)\,k^{(n)}(\xi,\eta)\,d\eta\,d\xi.
\end{equation*}
Clearly each $k^{(n)}\in C^2(\bar{\omega})$. For brevity, we omit the arguments 
$(X,Y)$ of all functions, with the understanding that derivatives and integrals 
are evaluated pointwise. Differentiating the recurrence relation then yields
\begin{equation*}
\begin{aligned}
\partial_X k^{(n+1)} &= \partial_X k_0 + \int_0^Y R(\cdot,\eta)\,k^{(n)}(\cdot,\eta)\,d\eta, \\[2pt]
\partial_Y k^{(n+1)} &= \partial_Y k_0 + \int_0^X R(\xi,\cdot)\,k^{(n)}(\xi,\cdot)\,d\xi, \\[2pt]
\partial_X^2 k^{(n+1)} &= \partial_X^2 k_0 
+ \int_0^Y \bigl( R\,\partial_X k^{(n)} + (\partial_X R)\,k^{(n)} \bigr)(\cdot,\eta)\,d\eta, \\[2pt]
\partial_Y^2 k^{(n+1)} &= \partial_Y^2 k_0 
+ \int_0^X \bigl( R\,\partial_Y k^{(n)} + (\partial_Y R)\,k^{(n)} \bigr)(\xi,\cdot)\,d\xi, \\[2pt]
\partial_X\partial_Y k^{(n+1)} &= \partial_X\partial_Y k_0 + R\,k^{(n)}.
\end{aligned}    
\end{equation*}
Using the bound $\|R\|_{C^1(\bar{\omega})} \le M$, a straightforward induction 
argument yields the following estimates for the differences of consecutive iterates:
\begin{align}
\|k^{(n+1)} - k^{(n)}\|_{C^0(\bar{\omega})} 
&\le \frac{M^n}{n!\,n!}\,\|k_0\|_{C^0(\bar{\omega})},\nn \\[2pt]
\|\partial_X k^{(n+1)} - \partial_X k^{(n)}\|_{C^0(\bar{\omega})} 
&\le \frac{M^n}{n!}\,\|k_0\|_{C^1(\bar{\omega})}, \nn \\[2pt]
\|\partial_Y k^{(n+1)} - \partial_Y k^{(n)}\|_{C^0(\bar{\omega})} 
&\le \frac{M^n}{n!}\,\|k_0\|_{C^1(\bar{\omega})}, \nn\\[2pt]
\|\partial_X^2 k^{(n+1)} - \partial_X^2 k^{(n)}\|_{C^0(\bar{\omega})} 
&\le \frac{2M^n}{n!}\,\|k_0\|_{C^2(\bar{\omega})}, \nn\\[2pt]
\|\partial_Y^2 k^{(n+1)} - \partial_Y^2 k^{(n)}\|_{C^0(\bar{\omega})} 
&\le \frac{2M^n}{n!}\,\|k_0\|_{C^2(\bar{\omega})}, \nn \\[2pt]
\|\partial_X \partial_Y k^{(n+1)} - \partial_X \partial_Y k^{(n)}\|_{C^0(\bar{\omega})} 
&\le \frac{2M^n}{n!}\,\|k_0\|_{C^2(\bar{\omega})}.\nn
\end{align}
Here we use the fact that $X,Y\in [0,1]$ to replace the powers $X^nY^n$, $X^n$ and $Y^n$ by $1$. Based on the above results, we deduce that each of the telescoping series 
\[
\begin{aligned}
&\sum_{n=0}^\infty \bigl(k^{(n+1)} - k^{(n)}\bigr), \quad
\sum_{n=0}^\infty \bigl(\partial_X k^{(n+1)} - \partial_X k^{(n)}\bigr), \quad
\sum_{n=0}^\infty \bigl(\partial_Y k^{(n+1)} - \partial_Y k^{(n)}\bigr), \\[2pt]
&\sum_{n=0}^\infty \bigl(\partial_X^2 k^{(n+1)} - \partial_X^2 k^{(n)}\bigr), \quad
\sum_{n=0}^\infty \bigl(\partial_Y^2 k^{(n+1)} - \partial_Y^2 k^{(n)}\bigr), \quad
\sum_{n=0}^\infty \bigl(\partial_X \partial_Y k^{(n+1)} - \partial_X \partial_Y k^{(n)}\bigr)
\end{aligned}
\]
converges absolutely and uniformly on $\bar{\omega}$. Hence 
$k = \lim_{n\to\infty} k^{(n)}$ belongs to $C^2(\bar{\omega})$, and its 
derivatives are obtained by termwise differentiation. Moreover, $k$ satisfies 
\eqref{k} together with the boundary conditions \eqref{41bc}.
Furthermore, we get the estimates

\begin{equation} \label{estikc0}
\|k\|_{C^0(\bar{\omega})} \le e^M\,\|k_0\|_{C^0(\bar{\omega})},\, 
\|\partial_X k\|_{C^0(\bar{\omega})},\,
\|\partial_Y k\|_{C^0(\bar{\omega})}
\le e^M\,\|k_0\|_{C^1(\bar{\omega})},   \tag{A.6}      
\end{equation}
\begin{equation}\label{estikc2}
\|\partial_X^2 k\|_{C^0(\bar{\omega})},\;
\|\partial_Y^2 k\|_{C^0(\bar{\omega})},\;
\|\partial_X \partial_Y k\|_{C^0(\bar{\omega})}
\le 2e^M\,\|k_0\|_{C^2(\bar{\omega})}.  \tag{A.7}  
\end{equation}

Summing the above geometric series that arise from \eqref{estikc0}-\eqref{estikc2} 
yields the estimates
\begin{equation}\label{estc2}
\|k\|_{C^2(\bar{\omega})}\leq 2e^M \|k_0\|_{C^2(\bar{\omega})}. \tag{A.8}  
\end{equation}
Therefore, the estimate \eqref{estKc1} follows from \eqref{k410d} and \eqref{estc2} by the variable transformation immediately. This completes the proof of Proposition \ref{goursat1}.

\subsection*{Proof of Proposition \ref{goursat2}.}

The proof follows the same line of reasoning as that of Proposition~\ref{goursat1}. 
Under the same change of variables \eqref{cha var}, the boundary conditions \eqref{K42bc} is transformed into
\begin{equation}\label{42bc}
k(X,1-X) = F(X) = f(2X-1), \quad  X \in [0,1],\tag{A.9}  
\end{equation}
\begin{equation}\label{43bc}
\frac{1}{\sqrt{2}}\bigl( \partial_{X}k(1-Y,Y) + \partial_{Y} k(1-Y,Y) \bigr) = G(Y) = \sqrt{2}\,g(1-2Y), \quad Y \in [0,1]. \tag{A.10}  
\end{equation}
Equation \eqref{k} with the boundary conditions  \eqref{42bc}-\eqref{43bc} is equivalent to the 
Volterra-type integral equation
\begin{equation*}\label{k42int}
k(X,Y) = k_0(X,Y) + \iint_{\omega_2(X,Y)} R(\xi,\eta)\,k(\xi,\eta)\,d\xi d\eta,
\end{equation*}
where
\begin{equation*}\label{k420d}
k_0(X,Y) = F(X) - F(1-Y) + G(-Y)
\end{equation*}
and $\omega_2(X,Y) = \{(\xi,\eta) \mid  \xi < X,\; \eta > Y,\; \xi+\eta < 1\}$. 
From this point onward, the argument proceeds exactly as in the proof of Proposition \ref{goursat1}.

\subsection*{Proof of Proposition \ref{goursat4}.}

Similarly, under the same change of variables \eqref{cha var}, the boundary 
condition \eqref{K44bc} is transformed into

\begin{equation}\label{44bc}
k(X,0) = F(X) = f(X), \quad  X \in [0,1], \tag{A.11}  
\end{equation}
\begin{equation}\label{45bc}
\frac{1}{\sqrt{2}}\bigl( \partial_{X}k(1-Y,Y) + \partial_{Y} k(1-Y,Y) \bigr) = G(Y) = \sqrt{2}\,g(1-2Y), \quad  Y \in [0,1]. \tag{A.12}  
\end{equation}
Equation \eqref{k} with the boundary conditions \eqref{44bc}-\eqref{45bc} can be 
reformulated as the integral equation
\begin{equation*}\label{k44int}
k(X,Y) = k_0(X,Y) - \iint_{\omega_3(X,Y)} R(\xi,\eta)\,k(\xi,\eta)\,d\xi d\eta
- 2\iint_{\omega_4(X,Y)} R(\xi,\eta)\,k(\xi,\eta)\,d\xi d\eta,
\end{equation*}
where
\begin{equation*}\label{k440d}
k_0(X,Y) = F(X) + F(1-Y) - F(1) + \sqrt{2}\int_0^Y G(\eta)\,d\eta
\end{equation*}
and the regions are defined by 
$\omega_3(X,Y) = \{(\xi,\eta)\mid X < \xi < 1-Y,\, 0 < \eta < Y\}$ and 
$\omega_4(X,Y) = \omega_2(1-Y,0)$. 
This equation is again of Volterra type, and the existence, uniqueness, and regularity of the 
solution follow by the same fixed-point argument employed in Proposition \ref{goursat1}.

\bibliographystyle{unsrt}

\end{document}